\newtheorem{remark}{Remark}
\newtheorem{example}{Example}
\newsavebox{\savepar}
\newcommand{\NN}{\mathbb{N}}
\newcommand{\RR}{\mathbb{R}}
\newcommand{\CC}{\mathbb{C}}
\newcommand{\la}{\lambda}
\newcommand{\gln}{{\mbox{{\rm GL}}_{n}(\CC)}}
\newcommand{\glm}{{\mbox{{\rm GL}}_{m}(\CC)}}
\DeclareMathOperator{\orb}{O}
\DeclareMathOperator{\tsp}{T}
\DeclareMathOperator{\tr}{trace}
\DeclareMathOperator{\bun}{B}
\DeclareMathOperator{\pen}{PENCIL}
\DeclareMathOperator{\cod}{cod}
\newcounter{algo}[section]
\title{Generic symmetric matrix pencils\\ with bounded rank}
\author{Fernando De Ter\'{a}n\thanks{Departamento de Matem\'aticas, Universidad Carlos III de Madrid, Avda. Universidad 30, 28911 Legan\'es, Spain. {\tt fteran@math.uc3m.es}, {\tt dopico@math.uc3m.es}}
\and
Andrii Dmytryshyn\thanks{Department of Computing Science, Ume{\aa} University, SE-90187 Ume{\aa}, Sweden. {\tt andrii@cs.umu.se}}
\and
Froil\'{a}n M. Dopico\footnotemark[1]
}
\begin{document}

%\date{\today}

\maketitle

\begin{abstract}
We show that the set of $n \times n$ complex symmetric matrix pencils of rank at most $r$ is the union of the closures of $\lfloor r/2\rfloor +1$ sets of matrix pencils with some, explicitly described, complete eigenstructures. As a consequence, these are the generic complete eigenstructures of $n \times n$ complex symmetric matrix pencils of rank at most $r$. We also show that these closures correspond to the irreducible components of the set of $n\times n$ symmetric matrix pencils with rank at most $r$ when considered as an algebraic set.
\end{abstract}

\begin{keywords}
Matrix pencil, symmetric pencil, strict equivalence, congruence, orbit, bundle, spectral information, complete eigenstructure
\end{keywords}

\begin{AMS}
15A22, 15A18, 15A21, 65F15
\end{AMS}

\section{Introduction}

 The problem of determining the {\em generic} (or most likely) eigenstructures of matrix pencils or, more generally, matrix polynomials, has attracted considerable attention in the past few years. The eigenstructure of a matrix pencil encodes relevant information for the solution of the associated system of differential or differential-algebraic equations (see, for instance, \cite{BeTr12}). Sometimes the matrix pencil (or polynomial) has some particular structure (specially when the pencil or the polynomial comes from real-life applications), which is convenient to preserve in the numerical solution of the corresponding problem. This leads to the problem of determining the generic eigenstructure of the set of matrix pencils (or polynomials) having this special structure. Some of the structures arising in applications are easy to identify just looking at the entries of the pencil or polynomial, like the symmetric, Hermitian, skew-symmetric, palindromic, or alternating structures. Some others are usually hidden, like the low-rank structure, and they are typically considered separately. We are interested in this paper in the generic eigenstructure of low-rank complex symmetric pencils. We want to emphasize that complex symmetric pencils arise in real-life applications, like the analysis of certain cavity resonators \cite{ChASW05} or the design of emitting laser devices \cite{ArChin08}.

The problem of describing the generic eigenstructures is usually solved by providing a decomposition of the set of matrix pencils (or polynomials) of interest (structured or not) as the finite union of the closures of certain open sets, also known as {\it generic sets}. These sets are typically described in terms of their eigenstructures, namely the so-called {\it generic eigenstructures}  \cite{DeTe17,DeDo08,DmDo17,DmDo18}, or in terms of certain parameterizations \cite{DeDL17}.
More precisely, generic eigenstructures for full rank $m \times n$ matrix pencils are presented in \cite{DeEd95,EdEK99,VanD79} and for rank deficient matrix pencils in \cite{DeDo08,DeDL17}. In \cite{DmDo17} the corresponding problem for matrix polynomials is solved. For skew-symmetric matrix pencils and polynomials of odd grades the generic eigenstructures can be found in \cite{DmDo18}. More recently, a description of this kind has been provided in \cite{DeTe17} for $\top$-even, $\top$-odd, and (anti-)palindromic matrix pencils with bounded rank.

A motivation for dealing with low-rank pencils comes from possible applications to the study of the effects of low-rank perturbations on the spectral information of pencils, see e.g., \cite{DeDo07,DeDo16,DeDM08,HoMe94,MeMW17}, including structured low-rank perturbations of pencils with a particular structure, e.g., symmetric, Hermitian, palindromic, skew-symmetric, and alternating pencils \cite{Batz14,Batz15a,MMRR11,MMRR14}.
Also, the study of the generic sets of low-rank matrix pencils generalizes classical results \cite{Wate84} on the algebraic structure of the set of $n\times n$ singular pencils (i.e., those of rank at most $n-1$).

In this paper, we prove that the set of $n \times n$ complex symmetric matrix pencils of (normal) rank at most $r$, denoted by $\pen_{n\times n}^s(r)$, and with $r<n$, is the union of the closures of $\lfloor \frac{r}{2} \rfloor+1$ sets of matrix pencils with certain, explicitly described, complete eigenstructures (where, for a given $q\in\RR$, $\lfloor q\rfloor$ denotes the largest integer which is smaller than or equal to $q$).
In particular, this result illustrates how strong can be the effect of imposing the symmetry on this type of problems, since the set of $n\times n$ general (unstructured) matrix pencils with rank at most $r$ has $r+1$ generic complete eigenstructures  \cite{DeDo08}, and each of the sets of $n\times n$ skew-symmetric, $\top$-even, $\top$-odd, and (anti-)palindromic matrix pencils with rank at most $r$ has a single generic complete eigenstructure \cite{DeTe17,DmDo18}. Thus, the symmetric case presents a completely different and new behavior in the context of generic pencils with bounded rank. Moreover, we emphasize that some of the generic complete eigenstructures for symmetric matrix pencils are not generic for general matrix pencils (see Example~\ref{introex}). The reason for this interesting difference comes from the fact that the inclusion relationships between closure orbits (or bundles) under congruence of symmetric pencils do not necessarily coincide with the inclusion relationships of orbits (or bundles) under strict equivalence. In other words, a symmetric pencil can be the limit of a sequence of (non-symmetric) pencils which are strictly equivalent to another (fixed) symmetric pencil but, however, not being the limit of any sequence of symmetric pencils which are congruent to such pencil. This is illustrated in the following example.

\begin{example}
\label{introex}
Consider the following symmetric matrix pencils
$$
{\cal P}(\la) := \left[\begin{array}{ccc}\la-\la_1\\&\la-\la_2\\&&0\end{array}\right] \text{ with } \la_1\neq\la_2, \quad \text{ and } \quad
{\cal Q}(\la) := \left[\begin{array}{cc|c}&&\la\\&&1\\\hline\la&1\end{array}\right].
$$
There is an arbitrarily small (in norm) non-symmetric perturbation of ${\cal P}(\la)$ such that the resulting matrix pencil is strictly equivalent to ${\cal Q}(\la)$. For example, for arbitrarily small nonzero complex numbers $\epsilon_1$ and $\epsilon_2$, the following strict equivalence holds
$$
\left[\begin{array}{ccc}0&1&\frac{\la_2}{\epsilon_2}\\0&0&\frac{1}{\epsilon_2}\\1&0&0\end{array}\right]
\left[\begin{array}{ccc}\la-\la_1&0&\epsilon_1\\0&\la-\la_2&0\\0&\epsilon_2&0\end{array}\right]
\left[\begin{array}{ccc}1&0&0\\0&0&1\\\frac{\la_1}{\epsilon_1}&\frac{1}{\epsilon_1}&0\end{array}\right]=
\left[\begin{array}{cc|c}0&0&\la\\0&0&1\\\hline\la&1&0\end{array}\right].
$$
In other words, ${\cal P}(\la)$ is in the closure of the strict equivalence orbit of ${\cal Q}(\la)$, see also {\rm\cite{Pokr86}} for a more general result. Note that, despite both ${\cal P}(\la)$ and ${\cal Q}(\la)$ are symmetric, the introduced perturbation with $\epsilon_1$ and $\epsilon_2$ is not symmetric. Moreover, the result of this paper states that there is no any arbitrarily small symmetric perturbation of ${\cal P}(\la)$ which is strictly equivalent or, equivalently, congruent to ${\cal Q}(\la)$. (Recall that two complex symmetric matrix pencils are strictly equivalent if and only if they are congruent, see, for instance, {\rm\cite[p. 339]{Thom91}} for a proof.) Namely, ${\cal P}(\la)$ is not in the closure of the congruence orbit of ${\cal Q}(\la)$.
\end{example}

The pencil ${\cal P}(\la)$ in Example \ref{introex} corresponds to a generic complete eigenstructure for symmetric $3\times 3$ singular pencils, consisting of two simple eigenvalues, one left minimal index equal to zero, and another right minimal index equal to zero (see Theorem \ref{th:improvedDeDo}). However, for general (unstructured) $3\times3$ singular pencils, all generic eigenstructures do not have eigenvalues at all (see \cite{Wate84} and \cite[Cor. 7.2]{DeEd95}), and, therefore, ${\cal P}(\la)$ does not have one of the generic eigenstructure for $3\times3$ singular pencils under strict equivalence. By contrast, ${\cal Q}(\la)$ is a generic eigenstructure for both general (i.e., under strict equivalence) and symmetric (i.e., under congruence) singular $3\times 3$ matrix pencils.

We want to mention that the limit case $r=n$ has been omitted from the analysis carried out in this paper for obvious reasons. More precisely, the generic eigenstructure of $\pen_{n\times n}^s(n)$ consists of $n$ simple eigenvalues. This is because pencils in $\pen_{n\times n}^s(n)$ are generically regular, and for regular symmetric pencils the eigenvalues are generically simple, as happens with general (non-structured) pencils. It is, however, interesting to note the contrast between the case $r<n$ (there are $\lfloor \frac{r}{2} \rfloor+1$ different generic eigenstructures in $\pen_{n\times n}^s(r)$) and the case $r=n$ (there is just one generic eigenstructure in $\pen_{n\times n}^s(n)$).

The rest of the paper is organized as follows. Section \ref{sec:basic} includes some basic results on general and symmetric pencils, which allow us to describe, in Section~\ref{sec:main}, the set $\pen_{n\times n}^s(r)$ as the union of the closures of certain matrix pencil bundles. Section~\ref{sec:irredcomp} is devoted to prove that the closures of these generic bundles are the irreducible components of $\pen_{n\times n}^s(r)$. In Section \ref{sec:codim} the codimensions of the generic bundles for sets of symmetric matrix pencils of bounded rank are computed. Finally, some possible directions of a future research that may build up on the result of this paper are discussed in Section \ref{sec:future}.

The reader should bear in mind throughout this paper that all the matrices that we consider have complex entries.

\section{Basic notation, definitions, and previous results}
\label{sec:basic}
In this paper all matrices are denoted by capital letters, e.g. $A, \ B, \ U$, and matrix pencils are denoted by calligraphic capital letters, e.g., ${\cal P }, \ {\cal L}, \ {\cal M}$.
We will use either $\diag(A_1,\hdots,A_k)$, or $A_1\oplus \cdots\oplus A_k$, or even $\bigoplus_{i=1}^k A_i$, to denote a matrix or a matrix pencil which is block diagonal with diagonal blocks equal to $A_1,\hdots,A_k$. The transpose and the conjugate transpose of a matrix $M$ are denoted by $M^\top$ and $M^*$, respectively. The transpose of a pencil ${\cal P}(\la)=\la A+B$ is the pencil ${\cal P}^\top(\la):= {\cal P}(\la)^\top=\la A^\top+B^\top$ (we will use the notation ${\cal P}(\la)^\top$, since it is more intuitive). A {\em symmetric pencil} is a pencil ${\cal P}(\la)$ such that ${\cal P}(\la)^\top={\cal P}(\la)$ (i.e., its coefficients are symmetric matrices).

The {\em rank} of a matrix pencil ${\cal P }(\la)$, denoted by $\rank {\cal P }$ (omitting the dependence on $\la$), is the size of the largest non-identically zero minor of ${\cal P }(\la)$ (in other words, the rank of ${\cal P }(\la)$ when considered as a matrix in the field of rational functions in $\la$). This notion appear sometimes in the literature under the name {\em normal rank} (see, for instance, \cite{EdEK99}).

We denote by $\pen_{m \times n}$ the set of all matrix pencils with size $m\times n$.

We define the {\it orbit} of a matrix pencil $\lambda A + B$ under the
action of the group $\glm\times \gln$ on the space $\pen_{m\times n}$ by strict equivalence as follows:
\begin{equation} \label{equorbit}
\orb^e (\lambda A + B) := \{U^{-1} (\lambda A + B) V \ : \ U \in \glm, V \in \gln\}.
\end{equation}
%The strict equivalence orbit of $\lambda A + B$ is a differentiable manifold in the complex $2mn$ dimensional space, {\colb  $\pen_{m \times n}$}.

Similarly to the general case, we denote the vector space of symmetric matrix pencils of size $n\times n$ by $\pen_{n\times n}^{s}$.
Let us define the {\it orbit} of $\lambda A + B$ under the
action of the group $\gln$ on $\pen_{n\times n}^{s}$ by congruence as:
\begin{equation} \label{congorbit}
\orb^c (\lambda A + B): = \{W^{\top} (\lambda A + B) W \ : \ W \in \gln\}.
\end{equation}
%The orbit $\orb^c (\lambda A + B)$ is a differentiable manifold in the complex $n^2+n$ dimensional space of symmetric pencils.
If we identify $\pen_{n\times n}$ with $\CC^{2n^2}$, then $\pen_{n\times n}^{s}$ becomes a subspace of $\CC^{2n^2}$, and we consider in $\pen_{n\times n}^{s}$ the subspace topology induced by the Euclidean topology in $\CC^{2n^2}$. The closure of a set $S\subseteq \pen_{n \times n}^{s}$ in such topology will be denoted by $\overline S$. In those cases where $S$ depends on some parameters or variables $\mathbf p$, like for the orbits or bundles, we will write $\overline S(\mathbf p)$ instead of $\overline{S(\mathbf p)}$.

%Using the Frobenius norm of complex matrices \cite{High02}, we define a distance in $\pen_{m \times n}^{s}$ as $d(\lambda A + B, \lambda C + D) := \sqrt{\|A-C\|_F^2 + \|B-D\|_F^2}$, which makes $\pen_{m \times n}^{s}$ {\colb to be} a metric space. In particular, this metric {\colb induces a toplogy in $\pen_{n\times n}$ which} allows us to consider closures of subsets of $\pen_{m \times n}^{s}$.

From the matrices:
\begin{equation*}
F_d :=
\begin{bmatrix}
0&1&&\\
&\ddots&\ddots&\\
&&0&1\\
\end{bmatrix}_{d\times(d+1)} \qquad\mbox{\rm and}\qquad
G_d :=
\begin{bmatrix}
1&0&&\\
&\ddots&\ddots&\\
&&1&0\\
\end{bmatrix}_{d\times(d+1)},
\end{equation*}
we build up the following pencils
$$
{\cal L}_d(\la):=\la G_d+F_d,\qquad
{\cal M}_d(\la):=
\begin{bmatrix}0&{\cal L}_d(\la)^\top\\
{\cal L}_d(\la)&0
\end{bmatrix}_{(2d+1)\times(2d+1)},
$$
which are mainly used in Section \ref{sec:main}. The matrix ${\cal M}_0(\la)$ corresponds to a $1\times 1$ null matrix. This matrix is a degenerate case of ${\cal M}_d(\la)$, which is obtained after collapsing ${\cal L}_0(\la)$ and ${\cal L}_0(\la)^\top$, namely a null column and a null row, respectively.

As we are going to see (Theorem \ref{canonicalform_th}), the spectral information of a symmetric matrix pencil is encoded in a direct sum of blocks of the following types (though they are all matrix pencils, we omit the dependence on the variable $\la$ for brevity):

\begin{itemize}
\item Symmetric blocks associated with a couple of a right and a left minimal index:
$$
{\cal M}_d.
$$
\item Symmetric Jordan-like blocks associated with finite eigenvalues:
$$
{\cal J}_\ell^s(\mu):=\left[\begin{array}{cccc}&&1&\la-\mu\\&\iddots&\iddots&\\1&\la-\mu&&\\\la-\mu&&&\end{array}\right]_{\ell\times\ell}\quad (\mu\in\CC).
$$
\item Symmetric Jordan-like blocks associated with the infinite eigenvalue:
$$
{\cal J}_k^s(\infty):=\left[\begin{array}{cccc}&&\la&1\\&\iddots&\iddots&\\\la&1&&\\1&&&\end{array}\right]_{k\times k}.
$$
\end{itemize}
Note that ${\cal J}_1^s(\mu)=\la-\mu$, and ${\cal J}_1^s(\infty)=1$.

 The complete eigenstructure of a matrix pencil is the set of invariants of the pencil under strict equivalence, consisting of the lists of {\em elementary divisors} and {\em left} and {\em right minimal indices} \cite{VaDe83}. In other words, it is the underlying structure which is common to all pencils in the same strict equivalence orbit $\orb^{e}(\la A+B)$. This eigenstructure is shown in the {\em Kronecker canonical form} of $\la A+B$, which is a particular pencil in $\orb^{e}(\la A+B)$ displaying its complete eigenstructure \cite[Ch. XII]{Gant59}. If $\la A+B$ is a symmetric pencil, a key fact is that its complete eigenstructure can be also recovered by means of congruence transformations. In particular, there is a canonical form for congruence of symmetric pencils displaying their complete eigenstructure (namely, a Kronecker-like canonical form). This is stated in the following result, which is one of the basic tools in our developments.

\begin{theorem}\label{canonicalform_th}{\rm (Canonical form under congruence of symmetric pencils, \cite{Thom91})}. Every symmetric pencil is congruent to a block diagonal pencil whose diagonal blocks are of the forms ${\cal M}_d,{\cal J}^s_\ell(\mu)$, and ${\cal J}^s_k(\infty)$. The number of blocks associated with each eigenvalue $\mu,\infty$, together with their sizes, is unique.
\end{theorem}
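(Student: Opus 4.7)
My plan is to reduce the statement to the Kronecker canonical form (KCF) for general pencils and then invoke the cited classical fact that two complex symmetric matrix pencils are strictly equivalent if and only if they are congruent. Given a symmetric $\mathcal{P}(\lambda)$, the KCF determines its right minimal indices, left minimal indices, and finite and infinite elementary divisors. If I can exhibit an explicit symmetric block-diagonal pencil $\mathcal{C}(\lambda)$, built only from the blocks $\mathcal{M}_d$, $\mathcal{J}^s_\ell(\mu)$, and $\mathcal{J}^s_k(\infty)$, that has the same KCF as $\mathcal{P}(\lambda)$, then $\mathcal{C}(\lambda)$ and $\mathcal{P}(\lambda)$ are strictly equivalent and, being both symmetric, congruent.

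The first key observation is that $\mathcal{P}(\lambda)^\top = \mathcal{P}(\lambda)$ forces the multisets of right and left minimal indices of $\mathcal{P}(\lambda)$ to coincide: transposition exchanges left and right minimal indices in the KCF, and the KCF is unique up to permutation of blocks. Hence every singular block in the KCF of $\mathcal{P}(\lambda)$ appears as a pair $\mathcal{L}_d$, $\mathcal{L}_d^\top$, which I propose to realize by a single symmetric block $\mathcal{M}_d$. The next step is a block-by-block verification. For $\mathcal{M}_d$, a permutation of its two block rows (a strict equivalence) exhibits it as $\mathcal{L}_d \oplus \mathcal{L}_d^\top$, so it contributes exactly one right and one left minimal index equal to $d$ and no elementary divisors. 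For $\mathcal{J}^s_\ell(\mu)$ I would compute the Smith form directly: the determinant is a nonzero constant times $(\lambda - \mu)^\ell$, while a suitable $(\ell-1)\times(\ell-1)$ minor is a nonzero constant at $\lambda = \mu$, so the only non-unit invariant factor is $(\lambda - \mu)^\ell$. An analogous computation applied to the reversed pencil yields the statement for $\mathcal{J}^s_k(\infty)$. Assembling these, $\mathcal{C}(\lambda)$ has the same KCF as $\mathcal{P}(\lambda)$, proving existence; uniqueness of the number and sizes of all blocks then follows from the uniqueness of the KCF together with the invariance of KCF under congruence.

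The main obstacle I foresee is precisely the appeal to \textbf{strict equivalence $=$ congruence} for symmetric complex pencils. The paper cites Thompson for this result, and my plan uses it as a black box. Avoiding it would force a direct and considerably more intricate construction of the congruence, typically by an inductive deflation that peels off one eigenvalue or one pair of minimal indices at a time using symmetry-preserving elementary transformations, or alternatively by a Witt-type cancellation argument applied to the associated symmetric bilinear form over $\mathbb{C}[\lambda]$.
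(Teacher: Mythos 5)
The paper offers no proof of this theorem: it is quoted from Thompson \cite{Thom91} as a known canonical form, so there is no internal argument to compare against. Your reduction is a correct and standard way to derive it from two other classical facts, namely the existence and uniqueness of the Kronecker canonical form and the fact that strictly equivalent complex symmetric pencils are congruent. The block-by-block verifications check out: a block-row permutation carries ${\cal M}_d$ to ${\cal L}_d\oplus{\cal L}_d^\top$, so it contributes exactly one right and one left minimal index equal to $d$; the leading $(\ell-1)\times(\ell-1)$ principal submatrix of ${\cal J}^s_\ell(\mu)$ is anti-triangular with ones on its anti-diagonal, so the only nontrivial invariant factor is $(\la-\mu)^\ell$; the reversal argument handles ${\cal J}^s_k(\infty)$; and the observation that ${\cal P}(\la)={\cal P}(\la)^\top$ forces the left and right minimal index lists to coincide is precisely what lets the sizes match and every singular pair be packaged into an ${\cal M}_d$. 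Uniqueness does follow from uniqueness of the KCF, since congruence is a strict equivalence. What this route buys is brevity; what it costs is reliance on two external pillars rather than the self-contained (and, as you say, much more intricate) symmetric deflation that Thompson actually carries out.

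The one point you must make explicit to avoid a charge of circularity is that the equivalence ``strictly equivalent $\Leftrightarrow$ congruent'' for complex symmetric pencils admits a proof that does not pass through the canonical form (some expositions derive it \emph{from} the canonical form, which would make your argument beg the question). A direct proof exists: if $P(\la A+B)Q=\la A'+B'$ with $A,B,A',B'$ symmetric, set $T:=Q^{-\top}P$; the symmetry relations give $T(\la A+B)=(\la A+B)T^\top$, and since $T$ is nonsingular it has a square root $T^{1/2}$ that is a polynomial in $T$, whence $W:=(T^{1/2})^\top Q$ satisfies $W^\top(\la A+B)W=\la A'+B'$. Citing that argument (it is the one on p.~339 of \cite{Thom91}, which the paper itself invokes in Example~\ref{introex}) closes the gap and makes your proof complete.
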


The {\em regular part} of a symmetric matrix pencil $\lambda A + B$ consists of the blocks ${\cal J}^s_\ell(\mu)$ and ${\cal J}^s_k(\infty)$ corresponding to the finite and infinite eigenvalues, respectively. The {\em singular part} of $\lambda A + B$ consists of the blocks ${\cal M}_ d$ corresponding to the right (column) and left (row) minimal indices, which are equal to each other in the case of symmetric matrix pencils. We refer the reader to \cite[Th. 2.17]{Batz15b} for a representation of the canonical form close to the one we present in Theorem \ref{canonicalform_th}.

For a symmetric matrix pencil $\lambda A + B$, define the {\it congruence bundle} $\bun^{\rm{c}}(\lambda A + B)$ to be the union of symmetric matrix pencil orbits under congruence with the same canonical block structure (equal block sizes) but where the eigenvalues are unspecified.
This definition is analogous to the definition of bundle for matrix pencils and polynomials
\cite{Dmyt17,DmJK17,DmKa14,EdEK99,JoKV13}.

A sequence of integers $\eta=(n_1, n_2, n_3, \dots)$ such that
$n_1+n_2+n_3 + \dots =n$ and $n_1\ge n_2 \ge \dots \ge 0$ is called
an {\it integer partition} of $n$, see e.g., \cite{EdEK99} for more details and references. For any $a \in \mathbb Z_{\ge 0}$ we
define $\eta+a$ as $(n_1+a, n_2+a, n_3+a, \dots)$.
The set of all integer partitions form a partially ordered set with
respect to { the following order:}
$\eta \preceq \nu $, where $\nu=(m_1, m_2, m_3, \dots)$, if and only if
$n_1+n_2+ \dots + n_i \le m_1 +m_2 + \dots + m_i,$ for $i\ge 1.$ When
$\eta \preceq \nu $ and
$\eta \neq \nu$ then we write
$\eta \prec \nu $.

The complete eigenstructure of a symmetric matrix pencil
  ${\mathcal S}$, with $e$ distinct eigenvalues $\mu_j \in \mathbb C\cup\{\infty\}$,
  can now be represented by the set of {\it integer
    partitions} $\varepsilon_{\mathcal S}$ and $\{ \delta^{\mu_j}_{\mathcal S}:\ j = 1, \dots ,  e \}$ (so called {\em Weyr-type characteristics}) as follows:
$\varepsilon_{\mathcal S}:=(r_0,r_1, \dots )$, where $r_k$ is the number of ${\cal M}_d$ blocks with the index $d$ greater than or equal to $k$;
for each distinct $\mu_j \in {\mathbb C}\cup\{\infty\}$, $\delta^{\mu_j}_{\mathcal S}:=
  (h_1^{\mu_j}, h_2^{\mu_j}, \dots)$, where $h_k^{\mu_j}$ is the
  number of Jordan blocks ${\cal J}^s_\ell(\mu_j)$ of size $\ell$
  greater than or equal to $k$.

\section{Main results}\label{sec:main}

We start with a block (anti-triangular) decomposition of symmetric pencils that separates the regular part from the singular part corresponding to the right minimal indices, and the one corresponding to the left minimal indices. It can be achieved by unitary congruence, and it is the analogue for symmetric pencils to the staircase form of general pencils introduced in \cite[p. 133]{Van79}.

\begin{theorem}\label{antitriangular_th}{\rm (Block anti-triangular form of symmetric pencils).} Let ${\cal S}(\la)$ be a symmetric pencil. Then, there is a unitary matrix $Q$ such that
\begin{equation}\label{antitriangular}
{\cal S}(\la)=Q^\top\left[\begin{array}{ccc}{\cal A}(\la)&{\cal B}(\la)&{\cal S}_{\mbox{\rm\tiny right}}(\la)\\
{\cal B}(\la)^\top&{\cal S}_{\mbox{\rm\tiny reg}}(\la)&0\\{\cal S}_{\mbox{\rm\tiny right}}(\la)^\top&0&0\end{array}\right]Q,
\end{equation}
where:
\begin{itemize}
\item[\rm(i)] ${\cal A}(\la)$ is a symmetric pencil.

\item[\rm(ii)] ${\cal S}_{\mbox{\rm\tiny reg}}(\la)$ is a regular symmetric pencil whose elementary divisors are exactly those of ${\cal S}(\la)$.

\item[\rm(iii)] ${\cal S}_{\mbox{\rm\tiny right}}(\la)$ is a pencil whose complete eigenstructure consists only of the right minimal indices of ${\cal S}(\la)$.
\end{itemize}
As a consequence, ${\cal S}_{\mbox{\rm\tiny right}}(\la)^\top$ is a pencil whose complete eigenstructure consists only of the left minimal indices of ${\cal S}(\la)$.
\end{theorem}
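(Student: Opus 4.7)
The plan is to adapt Van Dooren's staircase reduction~\cite{Van79} by replacing unitary equivalence (two different unitaries on the two sides) by unitary congruence (the same unitary $Q$ on both sides, via $Q^\top\mathcal S Q$). The crucial input from symmetry is that $\mathcal S(\la)=\mathcal S(\la)^\top$ implies $x(\la)$ is a right null vector if and only if $x(\la)^\top$ is a left null vector, so the coefficient subspaces of $\mathbb{C}^n$ encoding the right minimal bases coincide with those encoding the left minimal bases. Hence a single unitary congruence is in principle capable of simultaneously isolating the right-hand singular structure (in the right columns of the resulting pencil) and the left-hand singular structure (in the bottom rows).

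I would proceed by induction on $n$, the base case being trivial. For the inductive step, I would peel off one layer of singular structure using a single unitary congruence. Specifically, identify a subspace $\mathcal X\subseteq\mathbb{C}^n$ produced by the first stage of a staircase-type iteration on the right null space of $\mathcal S$ (for the zero right minimal indices, $\mathcal X=\ker A\cap\ker B$; for the general case, $\mathcal X$ is the appropriate initial term in a Wong-type sequence of $\mathcal S$). Choose an orthonormal basis of $\mathbb{C}^n$, in the standard Hermitian inner product, whose last $\dim\mathcal X$ vectors span $\mathcal X$, and form the unitary $Q$ with these columns. The symmetry of $\mathcal S$ then forces $Q^\top\mathcal S(\la)Q$ to acquire zero blocks in its last columns and, by symmetry of the congruence itself, in its last rows; an off-diagonal bordering pencil appears in the upper-right block, accumulating one layer of $\mathcal S_{\mbox{\rm\tiny right}}(\la)$. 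Applying the induction hypothesis to the smaller symmetric pencil in the interior completes the construction of $Q$ and of the form~\eqref{antitriangular}.

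Properties (i)--(iii) follow from bookkeeping once the induction is set up correctly. Property (i) is automatic, since $\mathcal A(\la)$ is the $(1,1)$-block of the symmetric pencil $Q^\top\mathcal S(\la)Q$. Property (ii) holds because at each peeling step only singular components are removed, so all elementary divisors of $\mathcal S(\la)$ are left inside the central block $\mathcal S_{\mbox{\rm\tiny reg}}(\la)$, which is therefore regular and carries exactly those elementary divisors. Property (iii) holds because the accumulated bordering pencils in the upper-right block are, by construction, strictly equivalent to $\bigoplus_i\mathcal L_{\epsilon_i}$, whose complete eigenstructure is precisely the list of right minimal indices of $\mathcal S(\la)$. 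The main obstacle is the inductive step itself: Van Dooren's algorithm for unstructured pencils uses two distinct unitary matrices, one adapted to the right null structure and one to the left, whereas here a single $Q$ must serve both purposes. What makes this possible is the symmetry-driven identification of the right and left staircase subspaces as \emph{actual subspaces of $\mathbb{C}^n$} (not just up to transposition of indices); verifying this identification, and that an orthonormal basis adapted to it yields the claimed block sizes and bordering pencils, is the technical heart of the argument.
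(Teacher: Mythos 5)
Your overall strategy --- a symmetry-preserving staircase reduction by induction on $n$, exploiting the fact that for a symmetric pencil the right and left singular subspaces coincide as actual subspaces of $\CC^n$ --- is a genuinely different route from the paper's, and structured staircase reductions of this kind do exist; but your sketch has a real gap exactly at the step you yourself flag as ``the technical heart.'' The claim that choosing a unitary $Q$ whose last columns span $\mathcal X$ forces $Q^\top\mathcal S(\la)Q$ to acquire \emph{zero last columns} is only correct when $\mathcal X\subseteq\ker A\cap\ker B$, i.e., for the layer of zero minimal indices. For nonzero minimal indices (take $\mathcal S=\mathcal M_1$, where $\ker A\cap\ker B=\{0\}$), the relevant subspace $\mathcal X$ spanned by the coefficient vectors of a right minimal basis satisfies only the isotropy condition $x^\top\mathcal S(\la)\,y=0$ for $x,y\in\mathcal X$, which yields a zero corner block, not zero columns; what must actually be arranged is that the $(2,3)$ and $(3,2)$ blocks of the transformed pencil vanish as well. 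That condition pairs the \emph{Hermitian} orthocomplement of $\mathcal X$ (forced on you because $Q$ is unitary) with $\mathcal S(\la)\mathcal X$ through the \emph{bilinear} form $x^\top y$, and for complex subspaces these two notions of orthogonality do not automatically cooperate. Verifying this compatibility, and then controlling the block sizes and the eigenstructure of the accumulated bordering pencils through the induction, is precisely the content that is asserted rather than proved; as written, the proposal is an announcement of a strategy.

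For comparison, the paper sidesteps the staircase iteration entirely. It starts from Thompson's canonical form under congruence (Theorem \ref{canonicalform_th}), splits each $\mathcal M_{d}$ into $\mathcal L_{d}$ and $\mathcal L_{d}^\top$ and permutes them into an anti-triangular arrangement $\mathcal S(\la)=W^\top\mathcal Z_0(\la)W$, then writes $W^\top=Q^\top R$ with $Q$ unitary and $R$ upper triangular. Since $R$ is upper triangular, $R\,\mathcal Z_0(\la)\,R^\top$ retains the anti-triangular zero pattern, and its nonzero corner blocks are strict equivalences (respectively, a congruence) of $\mathcal L(\la)$ and $\mathcal J(\la)$, which gives (i)--(iii) in a few lines. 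If you want to complete your argument, you must prove the isotropy-plus-compatibility statement above; alternatively, once the canonical form under congruence is available, the single QR factorization does all the work your induction was meant to do.
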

\begin{proof}
By Theorem \ref{canonicalform_th}, every symmetric pencil ${\cal S}(\la)$ is congruent to a direct sum of blocks of the form ${\cal M}_d,{\cal J}^s_\ell(\mu)$, and ${\cal J}^s_k(\infty)$. Define ${\cal J}(\la)$ to be a direct sum of all the symmetric Jordan-like blocks ${\cal J}^s_\ell(\mu)$ and ${\cal J}^s_k(\infty)$. Now, assume that there is some block ${\cal M}_d$ with $d\neq0$ (see the last paragraph of the proof for the other case). Splitting each block ${\cal M}_d$ and permuting the rows and columns in the canonical form of ${\cal S}(\la)$ we have
\begin{equation}\label{santidiag}
{\cal S}(\la)=
W^\top \begin{bmatrix}
&&&&&&{\cal L}_{d_t}(\la)\\
&&&&&\reflectbox{$\ddots$}&\\
&&&&{\cal L}_{d_1}(\la)&&\\
&&&{\cal J}(\la)&&&\\
&&{\cal L}_{d_1}(\la)^\top&&&&\\
&\reflectbox{$\ddots$}&&&&&\\
{\cal L}_{d_t}(\la)^\top&&&&&&\\
\end{bmatrix} W,
\end{equation}
where $W$ is nonsingular. For the blocks ${\cal M}_0$, the corresponding split block ${\cal L}_0(\la)$ in \eqref{santidiag} consists of a zero column, and ${\cal L}_0(\la)^\top$ consists of a zero row. Write $W^\top=Q^\top R$  (equivalently, $W=R^\top Q$), where $Q^\top$ is unitary and $R$ is upper-triangular, see e.g. \cite[p.112, Theorem 2.6.1]{HoJo85}. Define
\begin{equation*}
{\cal L}(\la):=
\begin{bmatrix}
&&{\cal L}_{d_t}(\la)\\
&\reflectbox{$\ddots$}&\\
{\cal L}_{d_1}(\la)&&\\
\end{bmatrix}.
\end{equation*}
Now, by an adequate conformal partition, we have
\begin{equation}\label{partition}
{\cal S}(\la)=
Q^\top
\begin{bmatrix}
R_{11}&R_{12}&R_{13}\\
0&R_{22}&R_{23}\\
0&0&R_{33}\\
\end{bmatrix}
\begin{bmatrix}
0&0&{\cal L}(\la)\\
0&{\cal J}(\la)&0\\
{\cal L}(\la)^\top&0&0\\
\end{bmatrix}
\begin{bmatrix}
R_{11}^\top&0&0\\
R_{12}^\top&R_{22}^\top&0\\
R_{13}^\top&R_{23}^\top&R_{33}^\top\\
\end{bmatrix}
Q.
\end{equation}
Multiplying the matrix pencils in the middle of the right-hand side of \eqref{partition} % by $R$ and $R^\top$
we get the following symmetric pencil
\begin{equation*}
\begin{split}
&\normalsize{{\cal Z}(\la):=} \\
&
{\footnotesize\begin{bmatrix}
R_{13}{\cal L}(\la)^\top R_{11}^\top+R_{12}{\cal J}(\la)R_{12}^\top+R_{11}{\cal L}(\la)R_{13}^\top&R_{12}{\cal J}(\la)R_{22}^\top+R_{11}{\cal L}(\la)R_{23}^\top&R_{11}{\cal L}(\la)R_{33}^\top\\
R_{22}{\cal J}(\la)R_{12}^\top+R_{23}{\cal L}(\la)^\top R_{11}^\top&R_{22}{\cal J}(\la)R_{22}^\top&0\\
R_{33}{\cal L}(\la)^\top R_{11}^\top&0&0\\
\end{bmatrix}}.
\end{split}
\end{equation*}
\noindent Note that $R_{11}{\cal L}(\la)R_{33}^\top$ is strictly equivalent to ${\cal L}(\la)$ and thus it is a pencil ${\cal S}_{\mbox{\rm\tiny right}}(\la)$ whose complete eigenstructure consists only of the right minimal indices of ${\cal S}(\la)$. As a consequence, ${\cal S}_{\mbox{\rm\tiny right}}(\la)^\top=R_{33}{\cal L}(\la)^\top R_{11}^\top$ is a pencil whose complete eigenstructure consists only of the left minimal indices of ${\cal S}(\la)$, and similarly $R_{22} {\cal J}(\la)R_{22}^\top$ is a regular symmetric pencil whose complete eigenstructure consists of the elementary divisors of ${\cal S}(\la)$. Therefore ${\cal Z}(\la)$ has the block anti-triangular structure of the block-pencil in \eqref{antitriangular}. Thus ${\cal S}(\la)=Q^\top{\cal Z}(\la)Q$ is the desired decomposition.

In the case where $d=0$, for all singular blocks ${\cal M}_d$ (namely, when all minimal indices of ${\cal S}(\la)$ are equal to zero), the blocks ${\cal L}_{d_1}(\la),\hdots,{\cal L}_{d_t}(\la)$ and ${\cal L}_{d_1}(\la)^\top,\hdots,{\cal L}_{d_t}(\la)^\top$ do not appear in the central matrix of the right-hand side of \eqref{santidiag}. In this case, this matrix is of the form
\begin{equation}\label{santidiag2}
\left[\begin{array}{cc}{\cal J}(\la)&0\\0&0\end{array}\right],
\end{equation}
where the last zero columns (respectively, rows) correspond to the right (resp., left) minimal indices of ${\cal S}(\la)$. Then, the matrix ${\cal L}(\la)$ does not appear in \eqref{partition}, and the central matrix in the right-hand side of this equation is equal to \eqref{santidiag2}. The previous proof can be adapted to this case just by removing also the blocks $R_{11},R_{12},$ and $R_{13}$, together with their corresponding transposes.

The case when ${\cal S} (\la)$ is regular, i.e., it does not have minimal indices at all, is trivial, since in this case ${\cal S}(\la)=W^\top {\cal J}(\la)W=Q^\top{\cal S}_{	\mbox{\rm \tiny reg}}(\la)Q$.
\end{proof}

\begin{remark}
As it can be seen from the proof of Theorem {\rm\ref{antitriangular_th}}, it may happen that ${\cal S}_{\mbox{\rm\tiny right}}(\la)$ (and, as a consequence, ${\cal S}_{\mbox{\rm\tiny right}}(\la)^\top$ as well) is not present in \eqref{antitriangular}. This happens when all minimal indices of ${\cal S}(\la)$ are equal to zero or when ${\cal S}(\la)$ is regular. In this case, \eqref{antitriangular} reads
$$
{\cal S}(\la)=Q^\top\left[\begin{array}{cc}{\cal S}_{\mbox{\rm\tiny reg}}(\la)&0\\0&0\end{array}\right]Q,
$$
and the presence of the right (respectively, left) null minimal indices is displayed in the last null columns (resp., rows) of the pencil in the middle of the right-hand side. In case such null columns and rows are not present, ${\cal S}(\la)$ is regular and does not have minimal indices.
\end{remark}

 The main result of this paper is Theorem \ref{th:improvedDeDo}, which provides a description of the generic complete eigenstructures of $\pen_{n\times n}^s(r)$, for $r<n$. In order to do that, we provide a decomposition of $\pen_{n\times n}^s(r)$ as the union of the closures of $\lfloor\frac{r}{2}\rfloor+1$ symmetric bundles, which determine the generic eigenstructures.

\begin{theorem}{\rm (Generic eigenstructures of symmetric matrix pencils with bounded rank).} \label{th:improvedDeDo}
Let $n$ and $r$ be integers such that $n \geq 2$ and $1\leq r \leq n-1$.
Define the following $\lfloor\frac{r}{2}\rfloor+1$ symmetric canonical forms of $n\times n$ complex symmetric matrix pencils with rank $r$:
\begin{equation}\label{max}
{\cal
K}_{a} (\lambda):=\diag(\underbrace{{\cal M}_{\alpha+1},\hdots,{\cal M}_{\alpha+1}}_{s},
\underbrace{{\cal M}_{\alpha},\hdots,{\cal M}_{\alpha}}_{n-r-s}, {\cal J}_1(\mu_1), \dots , {\cal J}_1(\mu_{r-2a}) ),\,
\end{equation}
for $a=0,1,\ldots,\lfloor\frac{r}{2}\rfloor\, $, where $a=(n-r)\alpha+s$ is the Euclidean division of $a$ by $n-r$, and $\mu_1,\hdots,\mu_{r-2a}$ are arbitrary complex numbers (different from each other).
Then:
\begin{enumerate}
\item[\rm (i)] For every $n\times n$ symmetric pencil ${\cal S }(\lambda)$ with rank
at most $r$, there exists an integer $a$ such that
$\overline{\bun^c}({\cal K}_{a}) \supseteq\overline{\bun^c}({\cal S})$.
\item[\rm (ii)] $\overline{\bun^c}({\cal K}_{a})
\not\supseteq \overline{\bun^c}({\cal K}_{a'})$ whenever $a \ne
a'$.
\item[\rm (iii)] The set $\pen_{n\times n}^s(r)$ is a closed subset of $\pen^{s}_{n \times n}$, and it is equal to $\displaystyle \bigcup_{0\leq a \leq \lfloor\frac{r}{2}\rfloor} \overline{\bun^c}({\cal K}_{a} )$.
\end{enumerate}
\end{theorem}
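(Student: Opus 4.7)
Part (iii) follows from two observations: $\pen^s_{n\times n}(r)$ is closed in $\pen^s_{n\times n}$ because it is the common zero locus of the $(r+1)\times(r+1)$ minors of the generic symmetric pencil $\lambda A+B$ (viewed as polynomials in the entries of $A$ and $B$), and each $\mathcal K_a$ has rank exactly $r$, so $\overline{\bun^c}(\mathcal K_a)\subseteq\pen^s_{n\times n}(r)$ since rank only drops under limits. Hence $\bigcup_a\overline{\bun^c}(\mathcal K_a)\subseteq\pen^s_{n\times n}(r)$, and the reverse inclusion is exactly part (i); the overall plan is therefore to establish (i) constructively and then deduce (ii) from the codimension computation of Section \ref{sec:codim}.

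For part (i), I assign to each symmetric $\mathcal S$ of rank $\rho\le r$ the integer $a(\mathcal S):=\sum_i d_i$, the sum of its right minimal indices; since $2a\le \rho\le r$, this lies in $\{0,\ldots,\lfloor r/2\rfloor\}$. The plan is to construct a family of congruences of $\mathcal K_a$ whose limit equals $\mathcal S$, using three elementary degeneration moves. Move~(m1), eigenvalue coalescence $\mathcal J^s_1(\mu_1)\oplus\mathcal J^s_1(\mu_2)\rightsquigarrow\mathcal J^s_2(\mu)$, is internal to the congruence bundle and is obtained as $\mu_1,\mu_2\to\mu$ via a suitable rescaling producing a Jordan block of size~2. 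Move~(m2), rank-lowering $\mathcal J^s_1(\mu)\rightsquigarrow\mathcal M_0$, is realized by the $1\times 1$ congruence $\epsilon^2(\lambda-\mu)\to 0$ as $\epsilon\to 0$, converting a simple eigenvalue into a trivial minimal index. Move~(m3), local singular-index redistribution, is performed within the anti-triangular corner provided by Theorem \ref{antitriangular_th} and rearranges the degrees $d_i$ of the $\mathcal M_{d_i}$ blocks without changing their sum. Applying (m2) exactly $r-\rho$ times to the $r-2a$ simple eigenvalues of $\mathcal K_a$ yields a pencil of rank $\rho$ with $n-\rho$ minimal-index blocks summing to $a$ and with $\rho-2a$ simple eigenvalues; (m1) then coalesces the remaining eigenvalues into the Jordan pattern of $\mathcal S$, and (m3) rearranges the singular indices into $(d_1,\ldots,d_{n-\rho})$.

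For part (ii), I use the codimension computation of Section \ref{sec:codim}, which shows that all bundles $\bun^c(\mathcal K_a)$ share the same codimension in $\pen^s_{n\times n}$. Each closure $\overline{\bun^c}(\mathcal K_a)$ is irreducible as the closure of the image of the continuous action of the connected parameter space $\gln\times(\CC^{r-2a}\setminus\Delta)$, where $\Delta$ denotes the locus of coincident eigenvalues. A strict inclusion $\overline{\bun^c}(\mathcal K_a)\supsetneq\overline{\bun^c}(\mathcal K_{a'})$ with $a\ne a'$ would then force a strict drop in dimension, contradicting the common codimension. Equality $\overline{\bun^c}(\mathcal K_a)=\overline{\bun^c}(\mathcal K_{a'})$ would imply $\mathcal K_a\in\overline{\bun^c}(\mathcal K_{a'})$, which under the codimension equality can hold only if $\mathcal K_a\in\bun^c(\mathcal K_{a'})$; but every element of $\bun^c(\mathcal K_{a'})$ has sum of right minimal indices equal to $a'\ne a$, a contradiction.

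The step I expect to be the main obstacle is move (m3), the redistribution of singular indices by congruence alone. As Example \ref{introex} illustrates, congruence is strictly less flexible than strict equivalence: one cannot trade a minimal index for a pair of eigenvalues, so the admissible degenerations of the singular part are heavily constrained. The anti-triangular form of Theorem \ref{antitriangular_th} is the key technical tool, since it localizes the perturbation to the corner where the singular blocks $\mathcal S_{\rm right}(\lambda)$ meet the regular block $\mathcal S_{\rm reg}(\lambda)$; one then needs to verify that within this corner every admissible distribution $(d_1,\ldots,d_{n-\rho})$ with $\sum d_i=a$ is reachable from the balanced distribution of $\mathcal K_a$ (augmented with the trivial indices produced by (m2)), while keeping $a$ invariant throughout.
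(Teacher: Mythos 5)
Your argument for part (ii) does not work, and part (ii) is precisely the hard half of the theorem. The premise that all bundles $\bun^c({\cal K}_a)$ share the same codimension is false: the computation of Section~\ref{sec:codim} gives $\cod\bun^c({\cal K}_{a})=(n+1)(n-r)-a(n-r-1)$, which is strictly decreasing in $a$ whenever $r<n-1$; the codimensions coincide only in the case $r=n-1$. With the correct values a dimension count does exclude $\overline{\bun^c}({\cal K}_{a})\supseteq\overline{\bun^c}({\cal K}_{a'})$ for $a'>a$, since the candidate subset would then have strictly larger dimension (the paper gets this direction instead from the majorization $\varepsilon_{\cal S}\prec\varepsilon_{{\cal S}_m}$ of the Weyr sequences of minimal indices under limits). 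But for $a'<a$ the set $\overline{\bun^c}({\cal K}_{a'})$ has \emph{smaller} dimension, and no dimension or irreducibility argument can prevent a lower-dimensional irreducible closed set from sitting inside a higher-dimensional one; your fallback step, that ${\cal K}_a\in\overline{\bun^c}({\cal K}_{a'})$ ``can hold only if'' ${\cal K}_a\in\bun^c({\cal K}_{a'})$, is unjustified (boundary strata of a bundle closure can easily contain whole orbits of the right dimension). This direction is where the real work lies: the paper writes each member of an approximating sequence in $\bun^c({\cal K}_a)$ in the block anti-triangular form of Theorem~\ref{antitriangular_th} via a \emph{unitary} congruence, uses compactness of the unitary group to pass to a limit that retains an $a\times(n-r+a)$ corner block ${\cal C}(\la)$ bordered by zeros, shows there must exist $\la_0$ with $\rank{\cal C}(\la_0)<a$, and then proves via the factorization $M(\la)=\pm\det{\cal R}(\la)\cdot\det{\cal C}_1(\la)\cdot\det{\cal C}_2(\la)^\top$ of every nonzero $r\times r$ minor that $\la_0$ is an eigenvalue of multiplicity at least two, contradicting the $r-2a'$ simple eigenvalues of ${\cal K}_{a'}$. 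None of this is replaceable by the codimension formula. (Note also that the anti-triangular form is the key tool here, in part (ii) --- not in your move (m3).)

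Part (i) is on the right track and close in spirit to the paper: your move (m2) is a legitimate variant of the paper's reduction to rank exactly $r$ (the paper instead perturbs ${\cal S}$ up to rank $r$ by adding $(1/m)E_{11}$), and (m1) is the paper's fact (a). Two gaps remain. First, you omit the infinite-eigenvalue blocks ${\cal J}^s_k(\infty)$, which require their own degeneration from simple eigenvalues (the paper's fact (b)). Second, and more seriously, the redistribution (m3) that you flag as the main obstacle is left unproved, and the tool you propose for it is the wrong one: what makes it work is not Theorem~\ref{antitriangular_th} but the symmetrization of Pokrzywa's covering rule --- from $P_m({\cal L}_{d_1-1}\oplus{\cal L}_{d_2+1})Q_m\to{\cal L}_{d_1}\oplus{\cal L}_{d_2}$ one builds the \emph{congruence} $\diag(Q_m^\top,P_m)\left[\begin{smallmatrix}0&\ast^\top\\ \ast&0\end{smallmatrix}\right]\diag(Q_m,P_m^\top)$ of the doubled symmetric block, yielding ${\cal M}_{d_1}\oplus{\cal M}_{d_2}\in\overline{\orb^c}({\cal M}_{d_1-1}\oplus{\cal M}_{d_2+1})$, and iterating balances the indices. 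Without this step (or an equivalent), and without a correct argument for part (ii), the proposal does not establish the theorem.
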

\begin{proof} The set $\pen_{n\times n}^s(r)$ is a closed subset of $\pen^{s}_{n \times n}$ because it is the intersection of $\pen^{s}_{n \times n}$ with the set of $n\times n$ pencils with rank at most $r$, which is a closed set. Therefore, claim (iii) is an immediate consequence of (i), so we only need to prove (i) and (ii).

Let us first prove (i). First, we are going to see that we can reduce the proof to the set of pencils with rank exactly $r$. Assume that any symmetric pencil with rank exactly $r$ belongs to $\displaystyle \bigcup_{0\leq a \leq  \lfloor\frac{r}{2}\rfloor} \overline{\bun^c}({\cal K}_{a} )$. We are going to show that any symmetric pencil ${\cal S}(\la)$ with rank $r_0\leq r$ is the limit of a sequence of symmetric pencils with rank $r$. This will imply that any symmetric pencil ${\cal S}(\la)$ with $\rank {\cal S}=r_0<r$ is in the closure of the set of symmetric pencils with rank $r$, which is contained in $\displaystyle \bigcup_{0\leq a \leq \lfloor\frac{r}{2}\rfloor} \overline{\bun^c}({\cal K}_{a} )$, because this last set is closed. Therefore, ${\cal S}(\la)$ will belong to $\displaystyle \bigcup_{0\leq a \leq \lfloor\frac{r}{2}\rfloor} \overline{\bun^c}({\cal K}_{a} )$ as well.

More precisely, we are going to see that, if ${\cal S}(\la)$ is a symmetric pencil with $\rank {\cal S}=r_0<n$, then it is the limit of a sequence of symmetric pencils with rank $r_0+1$. By Theorem \ref{canonicalform_th}, ${\cal S}(\la)$ is congruent to a pencil of the form
$$
{\cal K}_{\cal S}(\la):=\diag({\cal M}_{d_1},\widetilde {\cal S}(\la)),
$$
for some $d_1\geq0$, with $\widetilde {\cal S}$ being a symmetric pencil which is not relevant in our argument. Now, ${\cal K}_{\cal S}(\la)$ is the limit of the sequence $\{{\cal S}_m(\la)\}_{m\in\NN}$, where ${\cal S}_m(\la):={\cal K}_{\cal S}(\la)+(1/m)E_{11}$, and $E_{11}$ is the $n\times n$ matrix having an entry equal to $1$ in the $(1,1)$ position and zeroes elsewhere. Note that ${\cal S}_m(\la)$ is a symmetric pencil with $\rank {\cal S}_m=r_0+1$, for all $m\in\NN$, so this proves the claim.

Now, let ${\cal S}(\la)$ be an $n\times n$ symmetric matrix pencil with $ \rank {\cal S}=r$. By Theorem~\ref{canonicalform_th}, ${\cal S}(\la)$ is congruent to a direct sum of blocks of the forms ${\cal M}_d,{\cal J}^s_\ell(\mu)$, and ${\cal J}^s_k(\infty)$, and the number of blocks of type ${\cal M}_d$ is equal to $n-r$, by the rank-nullity theorem. Then, claim (i) follows from the following three facts:
\smallskip

\begin{itemize}

\item[(a)] ${\cal J}_\ell^s(\mu)\in\overline{\bun^c}({\cal J}_1(\mu_1),\hdots,{\cal J}_1(\mu_\ell))$.

\item[(b)] ${\cal J}_k^s(\infty)\in\overline{\bun^c}({\cal J}_1(\mu_1),\hdots,{\cal J}_1(\mu_k))$.

\item[(c)] $\diag({\cal M}_{d_1},\hdots,{\cal M}_{d_{n-r}})\in\overline{\orb^c}(\diag(\underbrace{{\cal M}_{\alpha+1},\hdots,{\cal M}_{\alpha+1}}_{\mbox{\tiny $s$}},\underbrace{{\cal M}_{\alpha},\hdots,{\cal M}_{\alpha})}_{\mbox{\tiny $n-r-s$}}),$ with \break$\sum_{i=1}^{n-r} d_i=(n-r)\alpha+s$ being the Euclidean division of $\sum_{i=1}^{n-r}d_i$ by $n-r$.
\end{itemize}

\smallskip

Let us prove (a)--(c). For claim (a) just note that ${\cal J}_\ell^s(\mu)$ is the limit of the following sequence of pencils (as $m$ tends to infinity):
$$
\left[\begin{array}{cccccc}&&&1&\la-\mu\\&&1&\la-\mu&\\&\iddots&\iddots&&\\1&\la-\mu&\\\la-\mu&&&&1/m\end{array}\right]_{\ell\times\ell},
$$
and that these pencils belong to ${\bun}^c({\cal J}_1(\mu_1),\hdots,{\cal J}_1(\mu_\ell))$.

Claim (b) can be proved in a similar way noticing that ${\cal J}_k^s(\infty)$ is the limit of the sequence of pencils
$$
\left[\begin{array}{cccccc}&&&\la&1\\&&\la&1&\\&\iddots&\iddots&&\\\la&1&\\1&&&&\la/m\end{array}\right]_{k\times k},
$$
and that these pencils belong, again, to ${\bun}^c({\cal J}_1(\mu_1),\hdots,{\cal J}_1(\mu_k))$.

As for claim (c), it follows from the general covering rules for (strict) equivalence orbits of matrix pencils. More precisely, it is known that, if $d_1$ and $d_2$ are such that $d_1>d_2$, then ${\cal L}_{d_1}\oplus {\cal L}_{d_2}\in\overline {\orb^c}({\cal L}_{d_1-1}\oplus {\cal L}_{d_2+1})$ (see \cite[p. 110--111]{Pokr86}). Then, there are two sequences of nonsingular matrices $\{P_m\}_{m\in\NN}$ and $\{Q_m\}_{m\in\NN}$ such that $P_m({\cal L}_{d_1-1}\oplus {\cal L}_{d_2+1}) Q_m$ converges to ${\cal L}_{d_1}\oplus {\cal L}_{d_2}$. Therefore, the sequence of pencils
\begin{equation}\label{sequence}
\left[\begin{array}{cc} Q_m^\top&0\\0&P_m\end{array}\right]\left[\begin{array}{cc}0&{\cal L}_{d_1-1}^\top\oplus {\cal L}_{d_2+1}^\top\\{\cal L}_{d_1-1}\oplus {\cal L}_{d_2+1}&0\end{array}\right]\left[\begin{array}{cc}Q_m&0\\0&P_m^\top\end{array}\right]
\end{equation}
converges to $\left[\begin{smallmatrix}0&{\cal L}_{d_1}^\top\oplus {\cal L}_{d_2}^\top\\{\cal L}_{d_1}\oplus {\cal L}_{d_2}&0\end{smallmatrix}\right]$. But note that the pencils in \eqref{sequence} are all congruent to $\left[\begin{smallmatrix}0&{\cal L}_{d_1-1}^\top\oplus {\cal L}_{d_2+1}^\top\\{\cal L}_{d_1-1}\oplus {\cal L}_{d_2+1}&0\end{smallmatrix}\right]$, and that the pencil $\left[\begin{smallmatrix}0&{\cal L}_{d_1}^\top\oplus {\cal L}_{d_2}^\top\\{\cal L}_{d_1}\oplus {\cal L}_{d_2}&0\end{smallmatrix}\right]$ is symmetric and with the same spectral structure as ${\cal M}_{d_1}\oplus {\cal M}_{d_2}$. As a consequence, ${\cal M}_{d_1}\oplus{\cal M}_{d_2}\in\overline{\orb^c}({\cal M}_{d_1-1}\oplus {\cal M}_{d_2+1})$. By repeating this argument with a direct sum $\bigoplus_{i=1}^{n-r} {\cal M}_{d_i}$, as long as there are two indices $d_i,d_j$ such that $|d_i-d_j|\geq2$, we end up with a pencil as in the right hand side of the inclusion in claim (c).

Now, let us prove (ii). First, we are going to see that if $a'>a$ then $\bun^c({\cal K}_{a'})$ is not contained in  $\overline{\bun^c}({\cal K}_{a})$. By contradiction, let us assume that ${\cal K}_{a'}(\la)\in\overline{\bun^c}({\cal K}_{a})$, for some choice of the eigenvalues $\mu_1,\hdots,\mu_{r-2a'}$ in \eqref{max}. This means that there is a sequence ${\cal S}_m(\la)\in\bun^c({\cal K}_a)$ such that ${\cal S}_m(\la)$ converges to ${\cal S}(\la):={\cal K}_{a'}(\la)$. Then (see, for instance, \cite[Lemma 1.2]{Hoyo90}) there is a majorization of the Weyr sequence of right minimal indices
\begin{equation}\label{majorization}
\varepsilon_{\cal S}\prec \varepsilon_{{\cal S}_m}.
\end{equation}
Notice that
$$
\begin{array}{ccc}
\varepsilon_{\cal S}&=&(\underbrace{n-r,\hdots,n-r}_{\alpha'+1},s',0,0,\hdots),\\
\varepsilon_{{\cal S}_m}&=&(\underbrace{n-r,\hdots,n-r}_{\alpha+1},s,0,0,\hdots),
\end{array}
$$
where
$$
\begin{array}{cccc}
a&=&(n-r)\alpha+s,&0\leq s<n-r,\\
a'&=&(n-r)\alpha'+s',&0\leq s'<n-r,
\end{array}
$$
are the Euclidean division of, respectively, $a$ and $a'$ by $n-r$. Since $a'>a$, it follows that either $\alpha'>\alpha$ or $\alpha'=\alpha$ and $s'>s$, in contradiction with \eqref{majorization}.

It remains to prove that if $a'<a$ then $\bun^c({\cal K}_{a'})$ is not contained in  $\overline{\bun^c}({\cal K}_{a})$ either. By contradiction, if $\bun^c({\cal K}_{a'})\subseteq\overline{\bun^c}({\cal K}_{a})$, then any pencil congruent to ${\cal K}_{a'}(\la)$ as in \eqref{max}, with $\mu_i\neq\mu_j$, for $i\neq j$, must be the limit of a sequence of pencils in $\bun^c({\cal K}_{a})$. Let $\{{\cal S}_m(\la)\}_{m\in\NN}$ be a sequence of pencils with ${\cal S}_m(\la)\in\bun^c({\cal K}_a)$, for all $m\in\NN$. Then, by Theorem \ref{antitriangular_th},
\begin{equation}\label{Sm}
{\cal S}_m(\la)=Q_m^\top\left[\begin{array}{ccc}{\cal A}_m(\la)&{\cal B}_m(\la)&{\cal S}_{\tiny\mbox{\rm right}}^{(m)}(\la)\\{\cal B}_m(\la)^\top&S_{\tiny\mbox{\rm reg}}^{(m)}(\la)&0\\{\cal S}_{\tiny\mbox{\rm right}}^{(m)}(\la)^\top&0&0\end{array}\right]Q_m,
\end{equation}
with $Q_m\in\CC^{n\times n}$ being a unitary matrix, for all $m\in\NN$, and
\begin{itemize}
\item ${\cal S}_{\tiny\mbox{\rm right}}^{(m)}(\la)$ has size $a\times (n-r+a)$, and complete eigenstructure consisting of the right minimal indices of ${\cal K}_a(\la)$,
\item ${\cal S}_{\tiny\mbox{\rm right}}^{(m)}(\la)^\top$ has size $(n-r+a)\times a$, and complete eigenstructure consisting of the left minimal indices of ${\cal K}_a(\la)$,
\item ${\cal S}^{(m)}_{\tiny\mbox{\rm reg}}(\la)$ is a regular pencil of size $(r-2a)\times(r-2a)$.
\end{itemize}

Now, let us assume that ${\cal S}_m(\la)$ converges to some pencil ${\cal S}(\la)$. Since the set of unitary $n\times n$ matrices is a compact subset of the metric space $(\CC^{n\times n},\|\cdot\|_2)$, the sequence $\{Q_m\}_{m\in\NN}$ has a convergent subsequence, say $\{Q_{m_j}\}_{j\in\NN}$, whose limit is a unitary matrix (see, for instance, \cite[Th. 2.36]{Knap05}). Set
$$
{\cal H}_m(\la):=\left[\begin{array}{ccc}{\cal A}_m(\la)&{\cal B}_m(\la)&{\cal S}_{\tiny\mbox{\rm right}}^{(m)}(\la)\\{\cal B}_m(\la)^\top&{\cal S}_{\tiny\mbox{\rm reg}}^{(m)}(\la)&0\\{\cal S}_{\tiny\mbox{\rm right}}^{(m)}(\la)^\top&0&0\end{array}\right]
$$
for the matrix in the middle of the right-hand side in \eqref{Sm}. Then the sequence $\{{\cal H}_{m_j}(\la)\}_{j\in\NN}$ is convergent as well, since ${\cal H}_{m_j}(\la)=\overline Q_{m_j}{\cal S}_{m_j}(\la)Q_{m_j}^*$, and both $\{Q_{m_j}\}_{j\in\NN}$ (and, as a consequence, $\{\overline Q_{m_j}\}_{j\in\NN}$ and $\{Q_{m_j}^*\}_{j\in\NN}$) and $\{{\cal S}_{m_j}(\la)\}_{j\in\NN}$ are convergent, because any subsequence of ${\cal S}_m(\la)$ converges to its limit (here $\overline Q_{m_j}$ denotes the matrix whose entries are the complex conjugates of the entries of $Q_{m_j}$). Moreover, by continuity of the zero blocks in the block-structure, $\{{\cal H}_{m_j}(\la)\}_{j\in\NN}$ converges to a matrix pencil of the form
\begin{equation}\label{h}
{\cal H}(\la)=\left[\begin{array}{ccc}{\cal A}(\la)&{\cal B}(\la)&{\cal C}(\la)\\{\cal B}(\la)^\top&{\cal R}(\la)&0\\{\cal C}(\la)^\top&0&0\end{array}\right],
\end{equation}
with
\begin{itemize}
\item ${\cal C}(\la)$ being of size $a\times (n-r+a)$,

\item ${\cal C}(\la)^\top$ being of size $(n-r+a)\times a$, and

\item ${\cal R}(\la)$ being of size $(r-2a)\times(r-2a)$.
\end{itemize}

Therefore, the sequence $\{{\cal S}_{m_j}(\la)\}_{j\in\NN}$ converges to $Q^\top{\cal H}(\la)Q$, where $Q:=\displaystyle\lim_{j\rightarrow\infty}Q_{m_j}$ is unitary. Since $\{{\cal S}_m(\la)\}_{m\in\NN}$ is convergent, any subsequence must converge to its limit, so $\displaystyle\lim_{m\rightarrow\infty}{\cal S}_m(\la)={\cal S}(\la)=Q^\top{\cal H}(\la)Q$.

Now, let us assume that
\begin{eqnarray}
{\cal S}(\la)\in\bun^c({\cal K}_{a'}),\qquad \mbox{\rm with $a'<a$}\label{buncond}, \qquad\mbox{\rm and}\\
\mbox{\rm ${\cal S}(\la)$ has $r-2a'$ simple eigenvalues}\label{different}.
\end{eqnarray}
 Note that conditions \eqref{buncond}--\eqref{different} mean that ${\cal S}(\la)$ is congruent to ${\cal K}_{a'}(\la)$, for some eigenvalues $\mu_1,\hdots,\mu_{r-2a'}$ in \eqref{max}, different from each other. Moreover, \eqref{buncond} implies that
 \begin{equation}\label{rankcond}
 \rank {\cal S}=r.
 \end{equation}
  Notice, also, that $\rank {\cal C}(\la)^\top\leq a$, so condition \eqref{rankcond} implies that $\rank {\cal R}=r-2a$ and $\rank {\cal C}=a$. Then, ${\cal R}(\la)$ is a regular pencil with $r-2a$ eigenvalues (counting multiplicities). Let us denote these eigenvalues by $\widetilde \mu_1,\hdots,\widetilde \mu_{r-2a}$. By \eqref{buncond}, the pencil ${\cal S}(\la)$ has more than $r-2a$ eigenvalues (counting multiplicities). If $\rank {\cal C}(\la_0)=a$, for all $\la_0\in\CC\cup\{\infty\}$ (where ${\cal C}(\infty)$ is the leading term of ${\cal C}(\la)$) then $\rank {\cal S}(\mu)=\rank {\cal S}=r$ for all $\mu\neq \widetilde \mu_i$ ($i=1,\hdots,r-2a$), which means that ${\cal S}(\la)$ has only $r-2a$ eigenvalues. Therefore, there must be some $\la_0\in\CC\cup\{\infty\}$ such that $\rank {\cal C}(\la_0)<a$. In particular, such $\la_0$ is an eigenvalue of ${\cal S}(\la)$, since the number of linearly independent rows of ${\cal S}(\la_0)$ is less than $r$. Now, we consider the following two cases:

  \medskip

  \begin{itemize}
  \item[Case 1,] $\rank{\cal S}(\la_0)=\rank{\cal H}(\la_0)\leq r-2$: In this case, $\la_0$ is an eigenvalue of ${\cal S}(\la)$ with geometric multiplicity at least two, contradicting \eqref{different}.

 \item[Case 2,] $\rank{\cal S}(\la_0)=\rank{\cal H}(\la_0)= r-1$: We are going to see that, in this case, $\la_0$ is an eigenvalue of ${\cal S}(\la)$ with algebraic multiplicity at least 2, which is in contradiction with \eqref{different} as well. For this, we will prove that all $r\times r$ non-identically zero minors of ${\cal H}(\la)$ have $(\la-\la_0)^2$ as a factor. In order for an $r\times r$ submatrix of ${\cal H}(\la)$ to have non-identically zero determinant, it must contain fewer than $a+1$ columns among the last $n-r+a$ columns of ${\cal H}(\la)$ (namely, those corresponding to ${\cal C}(\la)$), and fewer than $a+1$ rows among the last $n-r+a$ rows of ${\cal H}(\la)$ (namely, those corresponding to ${\cal C}(\la)^\top$). This is because any set of $a+1$ columns among the last $n-r+a$ columns of ${\cal H}(\la)$ is linearly dependent, and the same for the last $n-r+a$ rows. As a consequence, any $r\times r$ non-identically zero minor, $M(\la)$, of ${\cal H}(\la)$ is of the form:
 $$
 M(\la):=\det\left[\begin{array}{ccc}{\cal A}(\la)&{\cal B}(\la)&{\cal C}_1(\la)\\{\cal B}^\top(\la)&{\cal R}(\la)&0\\{\cal C}_2(\la)^\top&0&0\end{array}\right],
 $$
 where ${\cal C}_1(\la)$ (respectively, ${\cal C}_2(\la)^\top$) is a submatrix of ${\cal C}(\la)$ (resp., ${\cal C}(\la)^\top$) with size $a\times a$. Therefore, $M(\la)=\pm\det {\cal R}(\la)\cdot\det{\cal C}_1(\la)\cdot\det{\cal C}_2(\la)^\top$. Since $\rank{\cal C}_1(\la_0)<a$ and $\rank{\cal C}_2(\la_0)^\top<a$, the binomial $(\la-\la_0)$ is a factor of both $\det {\cal C}_1(\la)$ and $\det{\cal C}_2(\la)^\top$, so $(\la-\la_0)^2$ is a factor of $M(\la)$.

 Therefore, the gcd of all $r\times r$ non-identically zero minors of ${\cal H}(\la)$ is a multiple of $(\la-\la_0)^2$. This implies (see, for instance, \cite[p. 141]{Gant59}) that the algebraic multiplicity of $\la_0$ as an eigenvalue of ${\cal H}(\la)$, and so of ${\cal S}(\la)$, is at least $2$.
 \end{itemize}
  \end{proof}

%\begin{remark}\label{hermitian_rem}
It is natural to ask about the generic eigenstructures of Hermitian pencils, instead of the symmetric ones. Despite the close similarities between both structures (Hermitian and symmetric), the Hermitian case presents a striking difference with respect to the symmetric case. More precisely, the set of $n\times n$ singular Hermitian matrix pencils (that is, the set of $n\times n$ Hermitian pencils with rank at most $r=n-1$) is irreducible {\rm\cite[Th. 4]{Wate84}}. However, Theorem {\rm\ref{th:improvedDeDo}} says that $\pen_{n\times n}^s(n-1)$ is the union of $\lfloor\frac{n-1}{2}\rfloor+1$ different closed subsets, so $\pen_{n\times n}^s(n-1)$ has, at least, $\lfloor\frac{n-1}{2}\rfloor+1$ irreducible components.  Actually, it has exactly $\lfloor\frac{n-1}{2}\rfloor+1$ irreducible components \cite[Th. 3]{Wate84}. Nonetheless, it is natural to ask whether the arguments used in this paper for symmetric pencils can be adapted to the Hermitian case.

Hermitian pencils are defined in the same way as symmetric pencils just replacing the transpose ($\top$) by the conjugate transpose ($*$) in the pencil coefficients. Therefore, a natural and straightforward attempt to adapt the proof of Theorem \ref{th:improvedDeDo} to the set of Hermitian pencils consists of just replacing $\top$ by $*$ and following the same arguments. There is a first difficulty in this approach, namely that the canonical form under conjugate transpose congruence of Hermitian pencils contains some additional structure due to the presence of pairs of complex conjugate eigenvalues and real eigenvalues with some associated {\em sign characteristic} \cite[Th. 6.1]{LaRo05}. A second difficulty comes from the fact that, after replacing $\top$ by $*$ in \eqref{h}, and adapting the subsequent arguments in the proof of Theorem \ref{th:improvedDeDo},  we would conclude that $\rank {\cal C}(\la_0)^*<a $ whenever $\rank {\cal C}(\la_0)<a $. However, this does not guarantee that $\la_0$ is a multiple eigenvalue of \eqref{h} (with $*$ instead of $\top$). Note that, if ${\cal C}(\la)=\la C_1+C_0$, then ${\cal C}(\la_0)^*=\overline\la_0C_1^*+C_0^*$ consists of evaluating not $\la_0$ but its complex conjugate in ${\cal C}^*(\la):=\la C_1^*+C_0^*$ (the key fact here is that the pencil ${\cal C}^*(\la)$ is not the same as ${\cal C}(\la)^*=\overline\la C_1^*+C_0^*$, unlike what happens with ${\cal C}^\top(\la)$ and ${\cal C}(\la)^\top$). Therefore, the Hermitian case deserves a different treatment.
%\end{remark}

\section{Irreducible components of $\pen_{n\times n}^s(r)$}\label{sec:irredcomp}

The set $\pen_{n\times n}^s(r)$ is an algebraic (namely, Zariski closed) subset of $\pen_{n\times n}$, when identified with $\CC^{2n^2}$, since it is defined as the set of common zeroes of several multivariable polynomials. These polynomials correspond to imposing both the conditions on the symmetric structure on $\la A+B$ (namely, $a_{ij}-a_{ji}=0$ and $b_{ij}-b_{ji}=0$, for $i<j$, where $A=[a_{ij}]$ and $B=[b_{ij}]$) and the low-rank conditions, $p_\ell(a_{ij},b_{ij})=0$, where the polynomials $p_\ell$ are the coefficients of all $(r+1)\times (r+1)$ minors of $\la A+B$ (these minors are polynomial in $\la$). It is known \cite[Th. 3]{Wate84} that, for $r=n-1$, this set has $\lfloor\frac{n+1}{2}\rfloor=\lfloor\frac{r}{2}\rfloor+1$ irreducible components. Theorem
\ref{irred_th} generalizes this result to any $r<n$ and provides a description of the irreducible components. Before proving it, we need to see that the closures of $\bun^c({\cal K}_a)$ in both the Zariski and the Euclidean topology coincide. This is an immediate consequence of \cite[Th. 3.1.6.1]{landsberg17} and the following two facts:
\begin{itemize}
\item[(F1)] $\bun^c({\cal K}_a)$ is a Zariski open set in its (Zariski) closure.
\item[(F2)] The Zariski closure of $\bun^c({\cal K}_{a})$ is an irreducible set.

\end{itemize}

To see (F1) above, note that, for any fixed set of eigenvalues, $\mu_1,\hdots,\mu_{r-2a}$, the orbit $\orb^c({\cal K}_a)$ is an open set in its closure \cite[p. 60]{Hump75}. Then, the same happens for $\bun^c({\cal K}_a)$, since it is the union of these orbits. Claim (F2) is stated and proved in the following theorem.

\begin{theorem}\label{th:irredbun}
The (Zariski) closure of the bundle $\bun^c({\cal K}_{a})$ is an irreducible set.
\end{theorem}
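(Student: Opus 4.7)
The plan is to exhibit the Zariski closure of $\bun^c({\cal K}_a)$ as the Zariski closure of the image of an irreducible complex algebraic variety under a polynomial morphism; the statement then follows from the standard algebraic-geometric fact that the Zariski closure of the image of an irreducible variety under a morphism is itself irreducible.

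Concretely, I would consider the parameter space $P := \gln \times \CC^{r-2a}$ together with the map
\[
\phi: P \longrightarrow \pen^s_{n\times n}, \qquad \phi(W, \mu_1, \dots, \mu_{r-2a}) := W^\top \, {\cal K}_a(\mu_1, \dots, \mu_{r-2a}) \, W,
\]
where ${\cal K}_a(\mu_1, \dots, \mu_{r-2a})$ is the canonical form \eqref{max} with the indicated eigenvalues. The space $P$ is irreducible because $\gln$ is a nonempty Zariski-open subset of the irreducible affine space $\CC^{n^2}$, hence itself irreducible, and the product of irreducible complex varieties is irreducible. The map $\phi$ is a morphism of algebraic varieties, as its coordinates are polynomials in the entries of $W$ and in $\mu_1,\dots,\mu_{r-2a}$.

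Next I would show that $\phi(P)$ and $\bun^c({\cal K}_a)$ have the same Zariski closure. The restriction of $\phi$ to the (Zariski-dense) open locus in $P$ on which $\mu_1,\dots,\mu_{r-2a}$ are pairwise distinct produces, by the very definition of the bundle, precisely $\bun^c({\cal K}_a)$; this gives $\bun^c({\cal K}_a)\subseteq\phi(P)$. Conversely, any point of $\phi(P)$ coming from coinciding eigenvalues is the Euclidean limit of pencils in $\bun^c({\cal K}_a)$ obtained by perturbing the coincident $\mu_i$ to pairwise distinct values, and since every Zariski-closed set is Euclidean-closed, such a point already lies in the Zariski closure of $\bun^c({\cal K}_a)$. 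Hence $\overline{\phi(P)}^{\mathrm{Zar}}$ coincides with the Zariski closure of $\bun^c({\cal K}_a)$.

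Finally, I would invoke the standard fact stated at the outset (which is proved, for instance, by observing that any splitting $\overline{\phi(P)}^{\mathrm{Zar}} = Z_1 \cup Z_2$ into proper Zariski-closed pieces would pull back via $\phi$ to a splitting of $P$ into two proper closed subsets, contradicting the irreducibility of $P$). The only point requiring care is the interplay between the Euclidean and Zariski topologies, resolved by the elementary observation that zero sets of polynomials are Euclidean-closed; I do not anticipate any deeper obstacle.
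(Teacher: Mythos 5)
Your proposal is correct and follows essentially the same route as the paper: both realize $\overline{\bun^c}({\cal K}_{a})$ as the Zariski closure of the image of the irreducible parameter space $\gln\times\CC^{r-2a}$ under the polynomial map $(W,\mu_1,\hdots,\mu_{r-2a})\mapsto W^\top{\cal K}_a(\la)W$, and then invoke the standard fact that (the closure of) the image of an irreducible variety under a morphism is irreducible. The only cosmetic differences are that the paper first enlarges the domain to all of $\CC^{n\times n}\times\CC^{r-2a}$ so as to quote the fact for polynomial images of affine space, whereas you argue directly with the irreducible quasi-affine variety $\gln\times\CC^{r-2a}$, and you are somewhat more explicit than the paper about why the points of the image with coincident eigenvalues do not enlarge the closure.
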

\begin{proof}
The proof follows similar arguments to the ones in the proof of Lemma~3.4 in \cite{DeDo08}. The closure of $\bun^c({\cal K}_{a})$ throughout the proof is considered in the Zariski topology.

Let us consider the following map:
$$
\begin{array}{cccc}
\Phi:&\gln\times (\CC\cup\{\infty\})^{r-2a}&\rightarrow&\pen_{n \times n}\\
&(P;\mu_1,\hdots,\mu_{r-2a})&\mapsto&P{\cal K}_a(\la)P^\top,
\end{array}
$$
where ${\cal K}_a(\la)$ has the eigenvalues $\mu_1,\hdots,\mu_{r-2a}$ as in \eqref{max}. Then it is straightforward to see that:
\begin{itemize}
\item[(a)] $\Phi$ is a polynomial map.

\item[(b)] $\Phi(\gln\times \CC^{r-2a})=\bun^c({\cal K}_{a})$.
\end{itemize}

In general, for a given subset $A$ of a topological space $(X,{\cal T}_X)$, and a continuous map between topological spaces $\phi:(X,{\cal T}_X)\rightarrow (Y,{\cal T}_Y)$, the following inclusion holds:
\begin{equation}\label{closures}
\phi\left(\overline A\right)\subseteq \overline{\phi(A)},
\end{equation}
where the closures are taken in ${\cal T}_X$ and ${\cal T}_Y$, respectively. To see this, just note that $A\subseteq\phi^{-1}\left(\overline{\phi(A)}\right)$ and, since $\phi$ is continuous, the set $\phi^{-1}\left(\overline{\phi(A)}\right)$ is closed in ${\cal T}_X$. Therefore
$
\overline A\subseteq \phi^{-1}\left(\overline{\phi(A)}\right),
$
and this implies \eqref{closures}.

Since $\gln$ is dense in $\CC^{n\times n}$ (both in the Euclidean and the Zariski topology), it follows that $\overline{\gln\times\CC^{r-2a}}=\CC^{n\times n}\times\CC^{r-2a}$. Since $\Phi$ is a polynomial function, it is continuous when considering in both $\gln\times \CC^{r-2a}$ and $\pen_{n \times n}$ the Zariski topology (where $\pen_{n \times n}$ is identified with $\CC^{2n^2}$). As a consequence of \eqref{closures} and (b) above,
$
\overline{\Phi(\CC^{n\times n}\times\CC^{r-2a})}=\overline{\Phi\left(\overline{\gln\times \CC^{r-2a}}\right)}=\overline{\bun^c}({\cal K}_a).
$
Now the result follows from the fact that the Zariski closure of the image of any $\CC^m$ by a polynomial map is irreducible (see \cite[p. 228]{Wate84}).
\end{proof}

As a consequence of Theorem \ref{th:irredbun} and the precedent comments, the Euclidean and Zariski closures of $\bun^c({\cal K}_a)$ coincide. Now claims (ii) and (iii) in Theorem \ref{th:improvedDeDo}, together with Theorem \ref{th:irredbun}, directly imply the following result.

\begin{theorem}\label{irred_th}
The set of $n\times n$ symmetric matrix pencils with rank at most $r<n$ is an algebraic set having $\lfloor\frac{r}{2}\rfloor+1$ irreducible components, which are the closure (either in the Zariski or the Euclidean topology) of the congruence bundles $\bun^c({\cal K}_a)$, with ${\cal K}_a(\la)$ as in \eqref{max}, for $a=0,1,\hdots,\lfloor\frac{r}{2}\rfloor$.
\end{theorem}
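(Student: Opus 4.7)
The plan is to assemble Theorem \ref{irred_th} directly from the three ingredients already established earlier in this section. First, the opening paragraph of Section \ref{sec:irredcomp} exhibits $\pen_{n\times n}^s(r)$ as the common zero set in $\pen_{n\times n}\cong\CC^{2n^2}$ of the symmetry relations $a_{ij}-a_{ji}=0$, $b_{ij}-b_{ji}=0$, and the polynomial coefficients (in $\la$) of all $(r+1)\times(r+1)$ minors of $\la A+B$, so $\pen_{n\times n}^s(r)$ is a Zariski-closed algebraic subset. Next, Theorem \ref{th:improvedDeDo}(iii) combined with the remark following Theorem \ref{th:irredbun} (that the Euclidean and Zariski closures of $\bun^c({\cal K}_a)$ coincide) yields the decomposition
$$
\pen_{n\times n}^s(r)=\bigcup_{a=0}^{\lfloor r/2\rfloor}\overline{\bun^c}({\cal K}_a),
$$
where each $\overline{\bun^c}({\cal K}_a)$ is Zariski closed and, by Theorem \ref{th:irredbun}, irreducible.

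It remains to verify that this is the irredundant irreducible decomposition, so that the $\overline{\bun^c}({\cal K}_a)$ are indeed the irreducible components. I would invoke the standard algebraic-geometric fact that if an algebraic set $X$ is written as a finite union $X=Y_0\cup\cdots\cup Y_N$ of irreducible closed subsets such that $Y_i\not\subseteq Y_j$ for $i\ne j$, then the $Y_i$ are exactly the irreducible components of $X$ (the argument is routine: any irreducible component $C$ of $X$ satisfies $C=\bigcup_i (C\cap Y_i)$, and irreducibility of $C$ forces $C\subseteq Y_i$ for some $i$; conversely, each $Y_i$ is irreducible, closed, and maximal among irreducible subsets of $X$ by the non-containment hypothesis, hence is a component). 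The non-containment hypothesis $\overline{\bun^c}({\cal K}_a)\not\supseteq\overline{\bun^c}({\cal K}_{a'})$ for $a\ne a'$ is precisely the content of Theorem \ref{th:improvedDeDo}(ii). Applying this fact concludes that the $\lfloor r/2\rfloor+1$ sets $\overline{\bun^c}({\cal K}_a)$ are the irreducible components of $\pen_{n\times n}^s(r)$, completing the proof.

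There is essentially no obstacle to the proof beyond careful bookkeeping, since all the substantive work has been done in Theorems \ref{th:improvedDeDo} and \ref{th:irredbun}. The only subtlety worth emphasising explicitly is the coincidence of Zariski and Euclidean closures, which ensures that the union decomposition of Theorem \ref{th:improvedDeDo}(iii), stated for the Euclidean topology, is simultaneously a decomposition in the Zariski topology; this is what makes the irreducible-component conclusion apply to both topologies at once, as the statement of Theorem \ref{irred_th} claims.
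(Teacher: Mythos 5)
Your proposal is correct and follows essentially the same route as the paper, which likewise deduces Theorem \ref{irred_th} directly from Theorem \ref{th:improvedDeDo}(ii)--(iii), Theorem \ref{th:irredbun}, and the coincidence of the Zariski and Euclidean closures of $\bun^c({\cal K}_a)$; your only addition is to spell out the standard fact that an irredundant finite union of irreducible closed sets gives the irreducible components, which the paper leaves implicit.
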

%\begin{proof} \end{proof}

\section{Codimensions of the generic bundles}
\label{sec:codim}
%By essentially repeating the proof of \cite[Lemma~3.4]{DeDo08}, see also \cite[Lemma~3.3]{DmDo18} and \cite{Wate84}, we obtain that the closures in the Euclidean topology and in the Zariski topology of the congruence orbit $\orb^c ({\cal P}(\la))$ of any $n \times n$ complex symmetric matrix pencil  ${\cal P}(\la)$ are equal in $\pen_{n\times n}^{s}$. Moreover, $\overline{\orb^c} ({\cal P}(\la))$ is an irreducible manifold in the Zariski topology of $\pen_{n\times n}^{s}$.

For an $n\times n$ symmetric matrix pencil $\lambda A + B$, the {\it dimension} of $\orb^c (\lambda A + B)$ is defined to be the dimension of the tangent space to this orbit
\begin{equation*}\label{taneqsp}
\tsp_{\lambda A + B}^c:=\{\lambda (X^\top A+AX) + (X^\top B + BX):
X\in{\mathbb
C}^{n\times n}\}
\end{equation*}
at the point $\lambda A + B$.
Define the normal space to $\orb^c (\lambda A + B)$ at the point $\lambda A + B$ to be the orthogonal complement to $\tsp_{\lambda A + B}^c$ with respect to the Frobenius inner product $<\la A+B,\la C+D>:=\tr(AC^*+BD^*)$. The {\it codimension} of the congruence orbit of $\lambda A + B$ is the dimension of the normal space. This codimension is equal to $n(n+1)$ minus the dimension of the congruence orbit of $\lambda A + B$.
Explicit expressions for the codimensions of congruence orbits of symmetric pencils in $\pen_{n\times n}^{s}$ are derived in \cite{DmKS14} and implemented in the MCS (Matrix Canonical Structure) Toolbox \cite{DmJK13, Joha06}.
Define the codimension of $\bun^c(\lambda A + B)$ as:
\begin{equation*} \label{buncodim}
    \cod  \bun^c(\lambda A + B):=
    \cod  \orb^c (\lambda A + B) - \ \#
    \left\{ \text{distinct eigenvalues of } {\lambda A + B} \right\}.
\end{equation*}
Using \cite[Theorem~2.3]{DmKS14}, see also \cite[Theorem~2.7]{DmJK13}, we obtain $\cod \orb^c({\cal K}_{a})$ for the symmetric pencils ${\cal K}_{a} (\lambda)$ in \eqref{max} as follows:
\begin{equation*} \label{codimcomporb}
\begin{aligned}
\cod \orb^c({\cal K}_{a}) &\phantom{a} = r-2a+2(\alpha +2)s + 2(\alpha +1)(n-r-s) + (n-r)(r-2a)  \\ %\frac{1}{2}
&\phantom{a}  + (n-r-s)(n-r-s-1)(2\alpha +2)/2 + s(s-1)(2(\alpha+1) +2)/2 \\
&\phantom{a}  + s(n-r-s)(2(\alpha+1) +1)\\
&\phantom{a} = r-2a+2(\alpha(n-r) +n -r+s) + (n-r)(r-2a)\\
&\phantom{a} + (n-r-s)(n-r-s-1)(\alpha +1) + s(s-1)(\alpha+1)  + s(s-1) \\
&\phantom{a} + 2s(n-r-s)(\alpha+1) + s(n-r-s) \\
&\phantom{a} = r-2a + 2(a + n - r) + (n-r)(r-2a)\\
&\phantom{a} + (\alpha +1) ((n-r-s)(n-r-1) + s(n-r-1)) + s(n-r-1) \\
&\phantom{a} = r-2a + 2(a + n - r) + (n-r)(r-2a)\\
&\phantom{a} + (\alpha +1) (n-r)(n-r-1) + s(n-r-1) \\
&\phantom{a} = n + (n-r)(r-2a+1) + (a+n-r)(n-r-1) \\
&\phantom{a} = (n-r)(n-a+1)+r -a =(n-a)(n-r+1).
\end{aligned}
\end{equation*}
Therefore for the generic bundles we have:
\begin{equation*} \label{codimcompbun}
\begin{aligned}
\cod \bun^c({\cal K}_{a}) &= (n-a)(n-r+1) - r + 2a = n(n-r+1) - r - a(n-r-1)  \\
& = (n+1)(n-r)-a(n-r-1).
\end{aligned}
\end{equation*}

This shows, in particular, that all generic bundles in Theorem \ref{th:improvedDeDo} have different codimension if $r<n-1$ and, in this case, the one with largest $a$ has the smallest codimension or, equivalently, the largest dimension.

\section{Future work}
\label{sec:future}
Further natural development of the results of this paper would be a description of the generic eigenstructures for symmetric matrix polynomials of bounded grade and rank. Such an extension requires a result on the existence of a symmetric matrix polynomial with a prescribed complete eigenstructure, similar to the results obtained in \cite{DeDV15,Dmyt17} for general and skew-symmetric matrix polynomials.

 Another natural closely related open problem is to provide a description of the set of $n\times n$ Hermitian pencils with bounded rank analogous to the one in Theorem~\ref{th:improvedDeDo}. The Hermitian structure requires a separate analysis since, as we have mentioned right after the proof of Theorem \ref{th:improvedDeDo}, some of the arguments used in this paper for symmetric pencils are not valid for Hermitian ones. Moreover, the result for Hermitian pencils will be probably very different to the one we have obtained in this paper for symmetric pencils.

The results of this paper show an important difference between general and symmetric perturbations of symmetric matrix pencils, see Example \ref{introex}. This is crucial for developing the stratification theory, i.e. explaining all the possible changes of complete eigenstructures under structure-preserving infinitesimally small perturbations, for symmetric matrix pencils. Currently stratification theory is developed for general, skew-symmetric, and state-space pencils \cite{DmJK17,DmKa14,EdEK99} but the theory for symmetric matrix pencils is still missing. This paper, together with \cite{Dmyt11,DmKS14} can be seen as important steps towards the stratification theory for symmetric matrix pencils. The problem of stratification of symmetric matrix polynomials is also open and will be a natural extension of the problems above.

\bigskip

\noindent{\bf Acknowldegments.} This work has been partially supported by the Ministerio de Econom\'{i}a y Competitividad of Spain
through grant MTM2015-65798-P, and by the Ministerio de Ciencia, Innovaci\'{o}n y Universidades of Spain through grant MTM2017--90682--REDT (Fernando De Ter\'{a}n and Froil\'{a}n M. Dopico).

\bibliographystyle{siamplain}
%  \bibliography{library}

\end{document}